\newtheorem{theorem}{Theorem}
\newtheorem{lemma}[theorem]{Lemma}
\newtheorem{prop}[theorem]{Proposition}
\theoremstyle{definition}
\newtheorem{example}[theorem]{Example}
\theoremstyle{remark}
\newtheorem{remark}[theorem]{Remark}
\numberwithin{equation}{section}
\begin{document}

\title{Rotation number of primitive vector sequences}

\author{Yusuke Suyama}
\address{Department of Mathematics, Graduate School of Science, Osaka City University, 3-3-138 Sugimoto, Sumiyoshi-ku, Osaka 558-8585 JAPAN}
\email{uniformlyconvergent@gmail.com}

\subjclass[2010]{Primary 05A99, Secondary 11A55, 57R91.}

\keywords{Lattice polygon, rotation number, toric topology, Hirzebruch-Jung continued fraction}

\date{}

\dedicatory{}

\begin{abstract}
We give a formula on the rotation number of a sequence of primitive vectors,
which is a generalization of the formula on the rotation number of a unimodular sequence
in \cite{Higashitani and Masuda}.
\end{abstract}

\maketitle

\section{Introduction}

Let $v_1, \ldots, v_d \in \mathbb{Z}^2$ be a sequence of primitive vectors
such that $\varepsilon_i={\rm det}(v_i, v_{i+1}) \ne 0$ for all $i=1, \ldots, d$,
and let $a_i=\varepsilon_{i-1}^{-1}\varepsilon_i^{-1}{\rm det}(v_{i+1}, v_{i-1})$,
where $v_0=v_d$ and $v_{d+1}=v_1$.
The {\it rotation number} of the sequence $v_1, \ldots, v_d$ around the origin is defined by
\begin{equation*}
\frac{1}{2\pi}\sum_{i=1}^d\int_{L_i}\frac{-ydx+xdy}{x^2+y^2},
\end{equation*}
where $L_i$ is the line segment from $v_i$ to $v_{i+1}$.
The sequence is called {\it unimodular} if $|\varepsilon_i|=1$ for all $i=1, \ldots, d$.
Recently A. Higashitani and M. Masuda \cite{Higashitani and Masuda} proved the following:

\begin{theorem}[\cite{Higashitani and Masuda}]\label{Higashitani-Masuda}
The rotation number of a unimodular sequence $v_1, \ldots, v_d$ around the origin is given by
\begin{equation*}
\frac{1}{12}\sum_{i=1}^d(3\varepsilon_i+a_i).
\end{equation*}
\end{theorem}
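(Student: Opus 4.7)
My approach would be induction on $d$, relying on a single elementary move on unimodular sequences that preserves both the rotation number and the quantity $Q:=\frac{1}{12}\sum_{i=1}^d(3\varepsilon_i+a_i)$, combined with direct verification on a short list of base cases.

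The key move is \emph{star subdivision}: between $v_i$ and $v_{i+1}$, insert $v':=v_i+v_{i+1}$. Since $(v_i,v_{i+1})$ is a $\mathbb{Z}$-basis of $\mathbb{Z}^2$, the vector $v'$ is automatically primitive and satisfies $\det(v_i,v')=\det(v',v_{i+1})=\varepsilon_i$, so the new length-$(d+1)$ sequence is again unimodular. Geometrically $v'$ lies in the open cone generated by $v_i$ and $v_{i+1}$, so the two segments $v_i\to v'$ and $v'\to v_{i+1}$ sweep the same signed angle at the origin as the direct segment $v_i\to v_{i+1}$, whence the rotation number is unchanged. A short local calculation, using $\det(v',v_{i-1})=\det(v_{i+1},v_{i-1})-\varepsilon_{i-1}$ and its mirror image on the right, yields $a_i\mapsto a_i-\varepsilon_i$, $a_{i+1}\mapsto a_{i+1}-\varepsilon_i$, and a new entry $a'=-\varepsilon_i$, while exactly one extra $\varepsilon_i$ appears in $\sum\varepsilon_j$. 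The net change in $Q$ is $3\varepsilon_i-3\varepsilon_i=0$, so $Q$ is also preserved.

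For the base cases I would take $d=2$ (where $a_1=a_2=0$ and $\varepsilon_2=-\varepsilon_1$, so both sides vanish) together with the fans of the minimal smooth complete toric surfaces $\mathbb{P}^2$ and the Hirzebruch surfaces $\mathbb{F}_n$, on which the claimed identity $3d+\sum a_i=12$ is a small direct check---essentially the combinatorial shadow of Noether's formula for smooth toric surfaces with $\chi(\mathcal{O})=1$.

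The main obstacle is the \emph{reduction step}: showing that every unimodular sequence can be transformed, by star subdivisions and their inverses (\emph{contractions}, admissible precisely when some $|a_i|=1$, since the defining relation gives $|\det(v_{i-1},v_{i+1})|=|a_i|$), into one of these base cases. For positively oriented complete sequences this is essentially minimal-model theory for smooth complete toric surfaces. In the general unimodular case, however, the $\varepsilon_i$ may take mixed signs and the polygonal path may self-intersect or wind several times around the origin; moreover a contraction may not be immediately available (as already happens for the $\mathbb{F}_n$ fan with $n\geq 2$, whose $a$-values $\{n,0,-n,0\}$ are never $\pm 1$), so one must first perform auxiliary subdivisions to create a contractible vertex. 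Tracking the signs through this combinatorial reduction is where the delicate work will concentrate.
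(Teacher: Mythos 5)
First, note that the paper does not prove this statement at all: it is quoted from \cite{Higashitani and Masuda} and used as a black box in the proof of the main theorem, so your proposal can only be assessed on its own terms. The local move you describe is correct and I can verify it: for a unimodular sequence $v'=v_i+v_{i+1}$ is primitive, $\det(v_i,v')=\det(v',v_{i+1})=\varepsilon_i$, the rotation number is unchanged because $v'$ lies strictly inside the cone spanned by $v_i$ and $v_{i+1}$, and (using $\varepsilon_i^{-1}=\varepsilon_i$) the bookkeeping $a_i\mapsto a_i-\varepsilon_i$, $a_{i+1}\mapsto a_{i+1}-\varepsilon_i$, $a'=-\varepsilon_i$ is right, so $\sum_i(3\varepsilon_i+a_i)$ is invariant. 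The $d=2$ base case also checks out.

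The gap is the reduction step, which you flag yourself but which is really the entire content of the theorem. Two concrete problems. First, a contraction at a vertex with $|a_i|=1$ need not preserve the rotation number: unimodularity together with $|\det(v_{i-1},v_{i+1})|=1$ only forces $v_i=\pm v_{i-1}\pm v_{i+1}$, and unless $v_i=v_{i-1}+v_{i+1}$ the vertex being removed need not lie in the cone spanned by its neighbours, so the two-segment path can sweep an extra $\pm 2\pi$. For example $v_{i-1}=(1,0)$, $v_i=(-1,-1)$, $v_{i+1}=(0,1)$ is unimodular with $|a_i|=1$, but $v_i=-(v_{i-1}+v_{i+1})$ and contracting it changes the winding by one; for your induction to close you would have to show that $Q$ changes by exactly the same amount in such cases, or restrict to genuine inverse star subdivisions and then prove one can always be created. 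Second, your list of terminal objects ($d=2$ plus the fans of $\mathbb{P}^2$ and $\mathbb{F}_n$) covers only positively oriented sequences of winding number one and the degenerate winding-number-zero case, whereas a general unimodular sequence may have $\varepsilon_i$ of both signs and wind $k$ times around the origin for any integer $k$; it is not established that subdivisions and legitimate contractions connect every such sequence to your list, nor that the process terminates. Until that combinatorial reduction is actually carried out, this is a plausible strategy rather than a proof.
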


When $\varepsilon_i=1$ for all $i$ and the rotation number is one,
Theorem \ref{Higashitani-Masuda} is well known and formulated as $3d+\sum_{i=1}^da_i=12$.
It can be proved in an elementary way, but interestingly it can also be proved using toric geometry,
to be more precise, by applying N\"{o}ther's formula to complete non-singular toric varieties of complex dimension two,
see \cite{Fulton}.
When $\varepsilon_i=1$ for all $i$ but the rotation number is not necessarily one,
Theorem \ref{Higashitani-Masuda} was proved in \cite{Masuda} using toric topology.
The proof is a generalization of the proof above using toric geometry.
The original proof of Theorem \ref{Higashitani-Masuda} by Higashitani and Masuda
was a slight modification of the proof in \cite{Masuda}
but then they found an elementary proof.
Another elementary proof of Theorem \ref{Higashitani-Masuda} is given by R. T. Zivaljevic \cite{Zivaljevic}.

Theorem \ref{Higashitani-Masuda} does not hold
when the unimodularity condition is dropped.
In this paper, we give a formula on the rotation number of a (not necessarily unimodular)
sequence of primitive vectors $v_1, \ldots, v_d$ with $\varepsilon_i \ne 0$ for all $i$, see Theorem \ref{main theorem}.
The proof is done by adding primitive vectors in an appropriate way
to the given sequence so that the enlarged sequence is unimodular
and then by applying Theorem \ref{Higashitani-Masuda} to the enlarged unimodular sequence.
This combinatorial process, that is, making the given sequence to a unimodular sequence
by adding primitive vectors corresponds to resolution of singularity by blow-up in geometry, see \cite{Fulton}.

The structure of the paper is as follows:
In Section 2, we state the main theorem and give an example.
In Section 3, we discuss Hirzebruch-Jung continued fractions used in our proof of the main theorem.
In Section 4, we give a proof of the main theorem.

\section{The main theorem}

Let $v_1, \ldots, v_d \in \mathbb{Z}^2$ be a sequence of vectors
such that $\varepsilon_i={\rm det}(v_i, v_{i+1}) \ne 0$ for all $i=1, \ldots, d$.
We define $v_0=v_d$ and $v_{d+1}=v_1$.
We assume that each vector is {\it primitive}, i.e. its components are relatively prime.

\begin{lemma}\label{x_i}
For each $i=1, \ldots, d$,
there exists a unique non-negative integer $x_i<|\varepsilon_i|$ such that
$x_i$ and $|\varepsilon_i|$ are relatively prime and
\begin{equation*}
P_i=(v_i, v_{i+1})\left(\begin{array}{cc}
1 & -x_i \\
0 & |\varepsilon_i|
\end{array}\right)^{-1}
\end{equation*}
is a unimodular matrix.
\end{lemma}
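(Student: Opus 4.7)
The plan is to rewrite $P_i$ in a form where its determinant and integrality become transparent, and then read off $x_i$ from a congruence in a convenient basis of $\mathbb{Z}^2$.

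First I would compute the product explicitly. Since
$$\begin{pmatrix} 1 & -x_i \\ 0 & |\varepsilon_i|\end{pmatrix}^{-1}=\frac{1}{|\varepsilon_i|}\begin{pmatrix} |\varepsilon_i| & x_i \\ 0 & 1\end{pmatrix},$$
a direct calculation gives
$$P_i=\left(v_i,\ \frac{x_i v_i+v_{i+1}}{|\varepsilon_i|}\right).$$
Hence $\det P_i=\det(v_i,v_{i+1})/|\varepsilon_i|=\varepsilon_i/|\varepsilon_i|=\pm 1$ automatically, so $P_i$ is unimodular \emph{iff} its entries are integers, i.e.\ iff
$$x_i v_i+v_{i+1}\in |\varepsilon_i|\,\mathbb{Z}^2. \tag{$\ast$}$$

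Next I would use primitivity of $v_i$ to extend it to a $\mathbb{Z}$-basis $\{v_i,w_i\}$ of $\mathbb{Z}^2$, and write $v_{i+1}=\alpha v_i+\beta w_i$ with $\alpha,\beta\in\mathbb{Z}$. From $\varepsilon_i=\det(v_i,v_{i+1})=\beta\det(v_i,w_i)=\pm\beta$ we get $|\beta|=|\varepsilon_i|$. Condition $(\ast)$ then reads
$$(x_i+\alpha)v_i+\beta w_i \in |\varepsilon_i|\mathbb{Z} v_i\oplus|\varepsilon_i|\mathbb{Z} w_i,$$
which (since $|\beta|=|\varepsilon_i|$) amounts to the single congruence $x_i\equiv-\alpha\pmod{|\varepsilon_i|}$. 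This has exactly one solution $x_i$ in $\{0,1,\ldots,|\varepsilon_i|-1\}$, giving both existence and uniqueness.

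Finally, for the coprimality condition: since $v_{i+1}=\alpha v_i+\beta w_i$ is primitive and $\{v_i,w_i\}$ is a $\mathbb{Z}$-basis, $\gcd(\alpha,\beta)=1$, and because $|\beta|=|\varepsilon_i|$ this gives $\gcd(\alpha,|\varepsilon_i|)=1$, hence $\gcd(x_i,|\varepsilon_i|)=1$.

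The proof is essentially bookkeeping with no real obstacle; the only point requiring a little care is noticing that $\det P_i=\pm 1$ is automatic from $\det(v_i,v_{i+1})=\varepsilon_i$, so the entire content of the lemma reduces to solving one linear congruence modulo $|\varepsilon_i|$, with the coprimality of $x_i$ and $|\varepsilon_i|$ being a free byproduct of the primitivity of $v_{i+1}$.
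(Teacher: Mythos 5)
Your proposal is correct and is essentially the paper's argument in different clothing: extending the primitive vector $v_i$ to a $\mathbb{Z}$-basis and solving the single congruence $x_i\equiv-\alpha\pmod{|\varepsilon_i|}$ is exactly what the paper does by constructing the unimodular matrix $\left(\begin{smallmatrix}1&n\\0&1\end{smallmatrix}\right)\left(\begin{smallmatrix}p&q\\-b&a\end{smallmatrix}\right)$ that brings $(v_i,v_{i+1})$ to the Hermite form $\left(\begin{smallmatrix}1&-x_i\\0&|\varepsilon_i|\end{smallmatrix}\right)$. The only cosmetic differences are that you make the uniqueness and the automatic $\det P_i=\pm1$ fully explicit, and you deduce coprimality from the primitivity of $v_{i+1}$ in the adapted basis rather than from the primitivity of $P_i^{-1}v_{i+1}=\left(\begin{smallmatrix}-x_i\\|\varepsilon_i|\end{smallmatrix}\right)$ as the paper does; these are equivalent observations.
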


\begin{proof}
Let $v_i=\left(\begin{array}{c}a\\b\end{array}\right)$ and $v_{i+1}=\left(\begin{array}{c}c\\d\end{array}\right)$.
We assume that $\varepsilon_i>0$.
Since $v_i$ is primitive, there exist $p, q \in \mathbb{Z}$ such that $ap+bq=1$.
Then we have
\begin{equation*}
\left(\begin{array}{cc}
p & q \\
-b & a
\end{array}\right)(v_i, v_{i+1})=\left(\begin{array}{cc}
1 & cp+dq \\
0 & |\varepsilon_i|
\end{array}\right).
\end{equation*}
There exists a unique $n \in \mathbb{Z}$ satisfying $-|\varepsilon_i|<cp+dq+n|\varepsilon_i| \leq 0$.
So we put $x_i=-(cp+dq+n|\varepsilon_i|)$. Then we have
\begin{equation*}
\left(\begin{array}{cc}
1 & n \\
0 & 1
\end{array}\right)\left(\begin{array}{cc}
p & q \\
-b & a
\end{array}\right)(v_i, v_{i+1})=\left(\begin{array}{cc}
1 & -x_i \\
0 & |\varepsilon_i|
\end{array}\right).
\end{equation*}
Hence
\begin{equation*}
P_i=\left(\begin{array}{cc}
a & -q \\
b & p
\end{array}\right)\left(\begin{array}{cc}
1 & -n \\
0 & 1
\end{array}\right)
\end{equation*}
is a unimodular matrix.
When $\varepsilon_i<0$, we can show that the assertion holds by a similar argument.
Since $\left(\begin{array}{c}-x_i\\|\varepsilon_i|\end{array}\right)=P_i^{-1}v_{i+1}$ is primitive,
$x_i$ and $|\varepsilon_i|$ are relatively prime.
\end{proof}

Note that ${\rm det}(P_i)=\cfrac{\varepsilon_i}{|\varepsilon_i|}$.
Similarly, there exists a unique non-negative integer $y_i<|\varepsilon_i|$ such that
\begin{equation}\label{y_i}
Q_i=(v_{i+1}, v_i)\left(\begin{array}{cc}
1 & -y_i \\
0 & |\varepsilon_i|
\end{array}\right)^{-1}
\end{equation}
is a unimodular matrix.

Since $x_i$ and $|\varepsilon_i|$ are relatively prime,
$x_i>0$ when $|\varepsilon_i| \geq 2$ and $x_i=0$ when $|\varepsilon_i|=1$.
For $i$ such that $|\varepsilon_i| \geq 2$, let
\begin{equation}\label{l_i}
\frac{|\varepsilon_i|}{x_i}
=n^{(i)}_1-\cfrac{1}{n^{(i)}_2-\cfrac{1}{\ddots-\cfrac{1}{n^{(i)}_{l_i}}}}, \ 
n^{(i)}_j \geq 2.
\end{equation}
be the Hirzebruch-Jung continued fraction expansion.
This continued fraction expansion is unique.
We define $l_i=0$ when $|\varepsilon_i|=1$.

\begin{lemma}\label{a_i}
Let $a_i=\varepsilon_{i-1}^{-1}\varepsilon_i^{-1}{\rm det}(v_{i+1}, v_{i-1})$.
Then $a_i$ satisfies
\begin{equation}\label{a_i 2}
\varepsilon_{i-1}^{-1}v_{i-1}+\varepsilon_{i}^{-1}v_{i+1}+a_iv_i=0.
\end{equation}
\end{lemma}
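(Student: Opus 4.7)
The plan is to view \eqref{a_i 2} as a linear identity in $\mathbb{R}^2$ and verify it by pairing with a basis. Since $\varepsilon_i = \det(v_i, v_{i+1}) \ne 0$, the pair $\{v_i, v_{i+1}\}$ is a basis of $\mathbb{R}^2$, so to show that the left-hand side of \eqref{a_i 2} vanishes, it suffices to check that the determinants of that vector against $v_i$ and against $v_{i+1}$ both vanish.

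First I would compute $\det(\varepsilon_{i-1}^{-1}v_{i-1} + \varepsilon_i^{-1}v_{i+1} + a_i v_i,\ v_i)$. By bilinearity this splits into three pieces: $\varepsilon_{i-1}^{-1}\det(v_{i-1}, v_i) = \varepsilon_{i-1}^{-1}\varepsilon_{i-1} = 1$, then $\varepsilon_i^{-1}\det(v_{i+1}, v_i) = -\varepsilon_i^{-1}\varepsilon_i = -1$, and the $v_i$ term contributes $0$. These cancel.

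Next I would compute $\det(\varepsilon_{i-1}^{-1}v_{i-1} + \varepsilon_i^{-1}v_{i+1} + a_i v_i,\ v_{i+1})$. Again by bilinearity this equals $\varepsilon_{i-1}^{-1}\det(v_{i-1}, v_{i+1}) + a_i \det(v_i, v_{i+1}) = -\varepsilon_{i-1}^{-1}\det(v_{i+1}, v_{i-1}) + a_i\varepsilon_i$. Substituting the defining expression $\det(v_{i+1}, v_{i-1}) = \varepsilon_{i-1}\varepsilon_i a_i$ makes the two terms cancel as $-\varepsilon_i a_i + \varepsilon_i a_i = 0$.

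Since both pairings against the basis $\{v_i, v_{i+1}\}$ vanish, the vector itself is zero, which is the claim. There is no real obstacle here: the statement is essentially Cramer's rule in disguise, and the only thing to be careful about is bookkeeping of signs in the determinants (using $\det(v_{i-1}, v_i) = \varepsilon_{i-1}$ and $\det(v_{i+1}, v_i) = -\varepsilon_i$) together with the sign in the definition of $a_i$.
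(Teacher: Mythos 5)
Your proof is correct and amounts to the same elementary verification as the paper's: the paper simply asserts the cyclic identity $\det(v_i,v_{i+1})v_{i-1}+\det(v_{i-1},v_i)v_{i+1}+\det(v_{i+1},v_{i-1})v_i=0$ as ``easy to check'' and divides by $\varepsilon_{i-1}\varepsilon_i$, while you check the divided form directly by pairing against the basis $\{v_i,v_{i+1}\}$. Your version is slightly more self-contained (it supplies the verification the paper omits), and all the sign bookkeeping in your computation is right.
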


\begin{proof}
It is easy to check that
\begin{equation*}
{\rm det}(v_i, v_{i+1})v_{i-1}+{\rm det}(v_{i-1}, v_i)v_{i+1}+{\rm det}(v_{i+1}, v_{i-1})v_i=0.
\end{equation*}
Dividing both sides by $\varepsilon_{i-1}\varepsilon_i={\rm det}(v_{i-1}, v_i){\rm det}(v_i, v_{i+1})$,
we obtain (\ref{a_i 2}).
\end{proof}

The following is our main theorem:

\begin{theorem}\label{main theorem}
Let $v_1, \ldots, v_d$ be a sequence of primitive vectors
and $\varepsilon_i={\rm det}(v_i, v_{i+1})$,
$a_i=\varepsilon_{i-1}^{-1}\varepsilon_i^{-1}{\rm det}(v_{i+1}, v_{i-1})$.
Let $x_i, y_i, l_i$, and $n^{(i)}_j$ be the integers defined in
Lemma \ref{x_i}, {\rm (\ref{y_i})}, and {\rm (\ref{l_i})}.
Then the rotation number of the sequence $v_1, \ldots, v_d$ around the origin is given by
\begin{equation}\label{formula}
\frac{1}{12}\sum_{i=1}^d\left(\left(3(l_i+1)
-\sum_{j=1}^{l_i}n^{(i)}_j\right)\frac{\varepsilon_i}{|\varepsilon_i|}
+a_i-\frac{x_i+y_i}{\varepsilon_i}\right).
\end{equation}
\end{theorem}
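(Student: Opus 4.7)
My plan is to follow the strategy sketched in the introduction: insert additional primitive vectors into the given sequence to make it unimodular, apply Theorem~\ref{Higashitani-Masuda} to the enlarged sequence, and then reconcile the resulting expression with (\ref{formula}) by tracking the contributions of the inserted vectors and of the changes at the original vertices.

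The insertion is done via the Hirzebruch--Jung expansion (\ref{l_i}): for each $i$ with $|\varepsilon_i|\geq 2$, I would define $w_j^{(i)}=P_iu_j$ for $j=1,\ldots,l_i$, where the integer points $u_0=(1,0),\,u_1=(0,1),\,u_2,\ldots,u_{l_i},\,u_{l_i+1}=(-x_i,|\varepsilon_i|)$ are determined by the recurrence $u_{j-1}+u_{j+1}=n_j^{(i)}u_j$. The recurrence together with the fact that $\det P_i=\varepsilon_i/|\varepsilon_i|$ gives $\det(w_{j-1}^{(i)},w_j^{(i)})=\varepsilon_i/|\varepsilon_i|$ and $w_{j-1}^{(i)}+w_{j+1}^{(i)}=n_j^{(i)}w_j^{(i)}$; in particular $w_1^{(i)}=P_ie_2=(v_{i+1}+x_iv_i)/|\varepsilon_i|$, and by the analogous construction using $Q_i$ and $y_i$ from the other end one gets $w_{l_i}^{(i)}=(v_i+y_iv_{i+1})/|\varepsilon_i|$. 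To see that this insertion does not change the rotation number, I would observe that every $w_j^{(i)}$ is a non-negative combination of $v_i$ and $v_{i+1}$, so the broken arc $v_i\to w_1^{(i)}\to\cdots\to w_{l_i}^{(i)}\to v_{i+1}$ lies in the convex hull of these points, and linear independence of $v_i$ and $v_{i+1}$ places the origin outside this convex hull; hence the arc is homotopic to $L_i$ in $\mathbb{R}^2\setminus\{0\}$ and the defining contour integral is unchanged.

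With Theorem~\ref{Higashitani-Masuda} now applicable, the problem reduces to computing $\tfrac1{12}\sum(3\varepsilon^{\mathrm{new}}+a^{\mathrm{new}})$ over the enlarged sequence. At each inserted vertex $w_j^{(i)}$, combining Lemma~\ref{a_i} with the three-term relation yields $\varepsilon^{\mathrm{new}}=\varepsilon_i/|\varepsilon_i|$ and $a^{\mathrm{new}}=-n_j^{(i)}\varepsilon_i/|\varepsilon_i|$, so the block of inserted vertices for segment $i$, together with the $3\varepsilon^{\mathrm{new}}=3\varepsilon_i/|\varepsilon_i|$ at the original $v_i$, contributes $(3(l_i+1)-\sum_jn_j^{(i)})\varepsilon_i/|\varepsilon_i|$. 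For the original vertex $v_i$ itself, the neighbors in the enlarged sequence are $w^{\mathrm{prev}}=w_{l_{i-1}}^{(i-1)}=(v_{i-1}+y_{i-1}v_i)/|\varepsilon_{i-1}|$ and $w^{\mathrm{next}}=w_1^{(i)}=(v_{i+1}+x_iv_i)/|\varepsilon_i|$ (with the understood convention when the respective chain is empty); expanding $\det(w^{\mathrm{next}},w^{\mathrm{prev}})$ and simplifying gives $a^{\mathrm{new}}_i=a_i-y_{i-1}/\varepsilon_{i-1}-x_i/\varepsilon_i$, and reindexing the resulting sum produces the remaining term $-\sum_i(x_i+y_i)/\varepsilon_i$ in (\ref{formula}).

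The main technical obstacle I anticipate is the identification $u_1=(0,1)$, equivalently the clean expression $w_1^{(i)}=(v_{i+1}+x_iv_i)/|\varepsilon_i|$. This reduces to showing that the recurrence $q_{j+1}=n_j^{(i)}q_j-q_{j-1}$ with $q_0=0,q_1=1$ produces $q_{l_i+1}=|\varepsilon_i|$, and the parallel first-coordinate recurrence with $s_0=1,s_1=0$ produces $s_{l_i+1}=-x_i$; both are standard consequences of the Hirzebruch--Jung expansion of $|\varepsilon_i|/x_i$. With those identifications in hand, the rotation-number preservation and the bookkeeping of $a$-values reduce to routine determinant computations.
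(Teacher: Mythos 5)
Your proposal is correct and follows essentially the same route as the paper: you insert the Hirzebruch--Jung chain $w^{(i)}_j$ (your recurrence $u_{j-1}+u_{j+1}=n^{(i)}_ju_j$ is exactly the paper's matrix-product definition), apply Theorem~\ref{Higashitani-Masuda} to the enlarged unimodular sequence, and your computations $a^{\mathrm{new}}=-n^{(i)}_j\varepsilon_i/|\varepsilon_i|$ at inserted vertices and $a^{\mathrm{new}}_i=a_i-y_{i-1}/\varepsilon_{i-1}-x_i/\varepsilon_i$ at original vertices match the paper's. The only substantive addition is your explicit convexity/homotopy justification that inserting the $w^{(i)}_j$ does not change the rotation number, a point the paper leaves implicit; the "technical obstacle" you flag is exactly the content of the paper's Lemma~\ref{continued fraction 1} and Lemma~\ref{xy}.
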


\begin{example}
Let $d=5$ and
\begin{equation*}
v_1=\left(\begin{array}{c}1\\0\end{array}\right),
v_2=\left(\begin{array}{c}1\\3\end{array}\right),
v_3=\left(\begin{array}{c}-2\\-1\end{array}\right),
v_4=\left(\begin{array}{c}-2\\1\end{array}\right),
v_5=\left(\begin{array}{c}5\\-3\end{array}\right).
\end{equation*}
Then we have the following:
\begin{center}
\begin{tabular}{|c|c|c|c|c|c|c|c|}\hline
$i$ & $\varepsilon_i$ & $a_i$ & $x_i$ & $y_i$ & $l_i$ & $n^{(i)}_1$ & $n^{(i)}_2$ \\ \hline
1 & 3 & $-2$ & 2 & 2 & 2 & 2 & 2 \\
2 & 5 & $\frac{1}{15}$ & 2 & 3 & 2 & 2 & 3 \\
3 & $-4$ & $\frac{7}{20}$ & 1 & 1 & 1 & 4 & \\
4 & 1 & $\frac{11}{4}$ & 0 & 0 & 0 & & \\
5 & 3 & $\frac{1}{3}$ & 1 & 1 & 1 & 3 & \\ \hline
\end{tabular}
\end{center}
\begin{figure}[htbp]
\begin{center}
\includegraphics{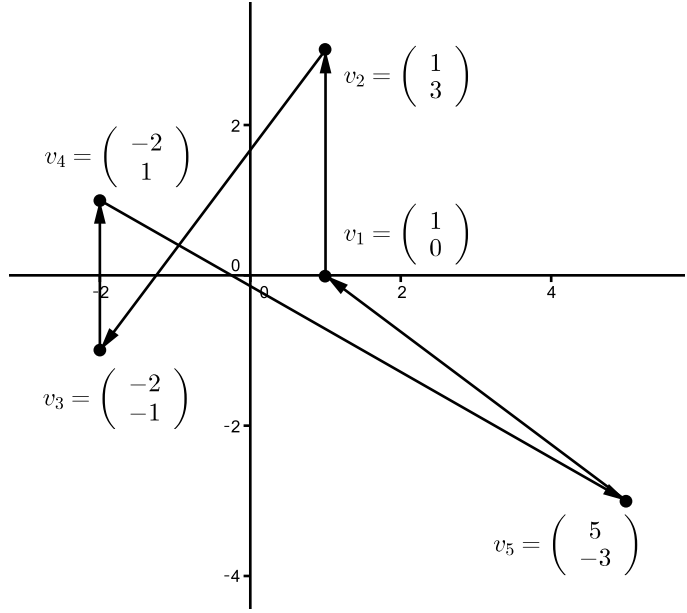}
\end{center}
\caption{A sequence of primitive vectors}
\label{fig:one}
\end{figure}
So we have
\begin{eqnarray*}
&&\sum_{i=1}^d\left(3(l_i+1)
-\sum_{j=1}^{l_i}n^{(i)}_j\right)\frac{\varepsilon_i}{|\varepsilon_i|} \\
&=&(3(2+1)-4)+(3(2+1)-5)-(3(1+1)-4)+3+(3(1+1)-3)=13, \\
&&\sum_{i=1}^da_i=-2+\cfrac{1}{15}+\cfrac{7}{20}+\cfrac{11}{4}+\cfrac{1}{3}=\cfrac{3}{2}, \\
&&\sum_{i=1}^d\frac{x_i+y_i}{\varepsilon_i}=\cfrac{2+2}{3}+\cfrac{2+3}{5}+\cfrac{1+1}{-4}+\cfrac{0+0}{1}+\cfrac{1+1}{3}=\cfrac{5}{2}.
\end{eqnarray*}
Therefore the value (\ref{formula}) is $\cfrac{1}{12}\left(13+\cfrac{3}{2}-\cfrac{5}{2}\right)=1$,
while the rotation number of the sequence $v_1, \ldots, v_5$ in Figure \ref{fig:one} is clearly one.
\end{example}

\section{Continued fractions}

Let $m \geq 2$ and $x(<m)$ be a positive integer prime to m, and let
\begin{equation*}
\frac{m}{x}
=n_1-\cfrac{1}{n_2-\cfrac{1}{\ddots-\cfrac{1}{n_l}}}, \ 
n_j \geq 2
\end{equation*}
be the continued fraction expansion.
This continued fraction expansion is unique,
and is called a {\it Hirzebruch-Jung continued fraction}.

\begin{lemma}\label{continued fraction 1}
Let $m \geq 2$ and $x(<m)$ be a positive integer prime to m, and let
\begin{equation*}
\frac{m}{x}
=n_1-\cfrac{1}{n_2-\cfrac{1}{\ddots-\cfrac{1}{n_l}}}, \ 
n_j \geq 2
\end{equation*}
be the continued fraction expansion.
Let $y(<m)$ be a unique positive integer such that $xy \equiv 1 \pmod{m}$.
Then the following identity holds:
\begin{equation*}
\left(\begin{array}{cc}
0 & -1 \\
1 & n_1
\end{array}\right)\left(\begin{array}{cc}
0 & -1 \\
1 & n_2
\end{array}\right)\ldots\left(\begin{array}{cc}
0 & -1 \\
1 & n_l
\end{array}\right)=\left(\begin{array}{cc}
\cfrac{1-xy}{m} & -x \\
y & m
\end{array}\right).
\end{equation*}
\end{lemma}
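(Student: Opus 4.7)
The plan is to argue by induction on the length $l$ of the Hirzebruch-Jung expansion of $m/x$. For the base case $l=1$, the equality $m/x = n_1$ combined with $\gcd(x,m)=1$ forces $x = 1$, $m = n_1$, and hence $y = 1$; both sides of the claimed identity then reduce to $\bigl(\begin{smallmatrix}0 & -1 \\ 1 & n_1\end{smallmatrix}\bigr)$.

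For the inductive step, I would first isolate the recursion governing the Hirzebruch-Jung expansion itself. Writing $m/x = n_1 - 1/(m'/x')$ with $m'/x' = n_2 - 1/(\cdots - 1/n_l)$, the hypothesis $n_j \geq 2$ throughout guarantees $m'/x' > 1$, and rearranging gives $(n_1 x - m)/x = x'/m'$. Since $\gcd(m,x)=1$, the left-hand side is already in lowest terms, so $m' = x$ and $x' = n_1 x - m$. The tail $n_2, \ldots, n_l$ is therefore the Hirzebruch-Jung expansion of $m'/x'$, and the inductive hypothesis expresses the product of the last $l-1$ matrices as $\bigl(\begin{smallmatrix}(1-x'y')/m' & -x' \\ y' & m'\end{smallmatrix}\bigr)$, where $y'$ is the inverse of $x'$ modulo $m'$ in the range $(0, m')$.

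Left-multiplying by $\bigl(\begin{smallmatrix}0 & -1 \\ 1 & n_1\end{smallmatrix}\bigr)$ produces a $2 \times 2$ matrix whose top-right and bottom-right entries are $-m' = -x$ and $-x' + n_1 m' = m$ by direct substitution, and whose top-left entry is $-y'$ while the bottom-left entry simplifies to $(1 + my')/x$ via $n_1 m' - x' = m$. The decisive step is then to identify the bottom-left entry as the desired $y$: I will check that $(1+my')/x$ is an integer (using $my' \equiv -1 \pmod x$, which follows from $x'y' \equiv 1 \pmod {m'}$ combined with $x' \equiv -m \pmod x$), that it lies in $(0, m)$ (from $1 \le y' \le x - 1$), and that $x\cdot(1+my')/x = 1 + my' \equiv 1 \pmod m$, so $y$ is indeed the inverse of $x$ modulo $m$. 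Once $y$ is so identified, the top-left entry $-y'$ agrees with $(1-xy)/m$ by the same relation $xy = 1 + my'$.

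The main obstacle, and the only place where the argument is not routine substitution, is this bridge between two inverse relations at different moduli: the hypothesis provides an inverse modulo $m' = x$, while the target requires an inverse modulo $m$. The congruence $x' \equiv -m \pmod x$ supplied by the recursion $x' = n_1 x - m$ is the single ingredient that converts one into the other, and with it in hand every entry of the matrix product falls into the claimed form.
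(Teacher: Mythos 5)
Your proposal is correct and follows essentially the same route as the paper: induction on $l$, identifying the tail of the expansion as the Hirzebruch--Jung expansion of $x/(n_1x-m)$, and multiplying the inductive matrix by $\bigl(\begin{smallmatrix}0 & -1 \\ 1 & n_1\end{smallmatrix}\bigr)$. The only cosmetic difference is the direction of the modular bookkeeping: the paper verifies up front that $(xy-1)/m$ is the inverse of $n_1x-m$ modulo $x$ and feeds it into the inductive hypothesis, whereas you keep the inductive $y'$ abstract and afterwards identify $(1+my')/x$ as $y$; the congruence checks involved are the same.
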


\begin{proof}
We prove this by induction on $l$.

If $l=1$, then we must have $x=1, n_1=m$ and $y=1$.
So the lemma holds when $l=1$.

Suppose that $l \geq 2$ and the lemma holds for $l-1$. We have
\begin{equation*}
\frac{x}{n_1x-m}
=n_2-\cfrac{1}{n_3-\cfrac{1}{\ddots-\cfrac{1}{n_l}}}.
\end{equation*}
Since $l \geq 2$, we have $x \geq 2$.
Since the right hand side in the identity above is greater than $1$, we have $0<n_1x-m<x$.
Since $m$ and $x$ are relatively prime, $x$ and $n_1x-m$ are relatively prime.
Moreover $\cfrac{xy-1}{m}$ is a positive integer less than $x$
and $(n_1x-m)\cfrac{xy-1}{m} \equiv 1 \pmod{x}$.
Hence by the hypothesis of induction, we obtain
\begin{eqnarray*}
\left(\begin{array}{cc}
0 & -1 \\
1 & n_2
\end{array}\right)\ldots\left(\begin{array}{cc}
0 & -1 \\
1 & n_l
\end{array}\right)&=&\left(\begin{array}{cc}
\cfrac{1-(n_1x-m)\cfrac{xy-1}{m}}{x} & -(n_1x-m) \\
\cfrac{xy-1}{m} & x
\end{array}\right) \\
&=&\left(\begin{array}{cc}
y-n_1\cfrac{xy-1}{m} & -(n_1x-m) \\
\cfrac{xy-1}{m} & x
\end{array}\right).
\end{eqnarray*}
Therefore we have
\begin{eqnarray*}
\left(\begin{array}{cc}
0 & -1 \\
1 & n_1
\end{array}\right)\ldots\left(\begin{array}{cc}
0 & -1 \\
1 & n_l
\end{array}\right)&=&\left(\begin{array}{cc}
0 & -1 \\
1 & n_1
\end{array}\right)\left(\begin{array}{cc}
y-n_1\cfrac{xy-1}{m} & -(n_1x-m) \\
\cfrac{xy-1}{m} & x
\end{array}\right) \\
&=&\left(\begin{array}{cc}
\cfrac{1-xy}{m} & -x \\
y & m
\end{array}\right),
\end{eqnarray*}
proving the lemma for $l$.
\end{proof}

\begin{prop}
The following identity holds:
\begin{equation*}
\cfrac{m}{y}=n_l-\cfrac{1}{n_{l-1}-\cfrac{1}{\ddots-\cfrac{1}{n_1}}}.
\end{equation*}
\end{prop}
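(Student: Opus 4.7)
The plan is to deduce the proposition from two applications of Lemma~\ref{continued fraction 1}, combined with the uniqueness of the Hirzebruch-Jung expansion. Write $A_j = \begin{pmatrix} 0 & -1 \\ 1 & n_j \end{pmatrix}$ and set $M = A_1 A_2 \cdots A_l$, so that Lemma~\ref{continued fraction 1} gives $M = \begin{pmatrix} (1-xy)/m & -x \\ y & m \end{pmatrix}$.

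The first step is to obtain a closed form for the \emph{reversed} product $A_l A_{l-1}\cdots A_1$. A direct check shows that $A_j = S A_j^T S$, where $S = \begin{pmatrix} -1 & 0 \\ 0 & 1 \end{pmatrix}$ is an involution. Substituting this into each factor and collapsing the interior $S$'s yields $A_l A_{l-1}\cdots A_1 = S(A_1\cdots A_l)^T S = S M^T S$, and a short calculation gives
\[
A_l A_{l-1}\cdots A_1 = \begin{pmatrix} (1-xy)/m & -y \\ x & m \end{pmatrix}.
\]

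The second step is to reinterpret this matrix via Lemma~\ref{continued fraction 1} applied to a different HJ expansion. Since each $n_j \geq 2$, the reversed sequence $(n_l, n_{l-1}, \ldots, n_1)$ is itself a legitimate Hirzebruch-Jung continued fraction, and by uniqueness it is the HJ expansion of its own value $m'/x'$ written in lowest terms with $1 \leq x' < m'$. Applying Lemma~\ref{continued fraction 1} to this reversed sequence then produces a second expression
\[
A_l A_{l-1}\cdots A_1 = \begin{pmatrix} (1-x'y')/m' & -x' \\ y' & m' \end{pmatrix},
\]
where $y'$ is the inverse of $x'$ modulo $m'$. Comparing the bottom-right and top-right entries of the two formulas forces $m' = m$ and $x' = y$, so the HJ expansion of $m/y = m'/x'$ is precisely $[n_l, n_{l-1}, \ldots, n_1]$, as claimed.

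The only delicate point will be the sign bookkeeping in the identity $A_j = S A_j^T S$ and the subsequent evaluation of $S M^T S$; once this is correct, the two applications of Lemma~\ref{continued fraction 1} match entry by entry and nothing further is required.
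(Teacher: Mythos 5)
Your proof is correct and is essentially the paper's own argument: the map $X\mapsto SX^{T}S$ is exactly the antihomomorphism $f$ that the paper applies to the product from Lemma \ref{continued fraction 1}, yielding the same matrix $\bigl(\begin{smallmatrix}(1-xy)/m & -y\\ x & m\end{smallmatrix}\bigr)$. Your final step (applying Lemma \ref{continued fraction 1} to the reversed sequence and invoking uniqueness of the Hirzebruch--Jung expansion) just spells out the identification that the paper leaves implicit.
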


\begin{proof}
Let $f:M_2(\mathbb{Z}) \rightarrow M_2(\mathbb{Z})$
be the antihomomorphism defined by
\begin{equation*}
f\left(\left(\begin{array}{cc}
a & b \\
c & d
\end{array}\right)\right)=\left(\begin{array}{cc}
a & -c \\
-b & d
\end{array}\right).
\end{equation*}
By Lemma \ref{continued fraction 1}, we have
\begin{eqnarray*}
\left(\begin{array}{cc}
0 & -1 \\
1 & n_l
\end{array}\right)\ldots\left(\begin{array}{cc}
0 & -1 \\
1 & n_1
\end{array}\right)&=&f\left(\left(\begin{array}{cc}
0 & -1 \\
1 & n_1
\end{array}\right)\ldots\left(\begin{array}{cc}
0 & -1 \\
1 & n_l
\end{array}\right)\right) \\
&=&f\left(\left(\begin{array}{cc}
\cfrac{1-xy}{m} & -x \\
y & m
\end{array}\right)\right)=\left(\begin{array}{cc}
\cfrac{1-xy}{m} & -y \\
x & m
\end{array}\right),
\end{eqnarray*}
proving the proposition.
\end{proof}

\begin{remark}
A similar assertion holds for regular continued fractions.
Let
\begin{equation*}
\frac{m}{x}
=n_1+\cfrac{1}{n_2+\cfrac{1}{\ddots+\cfrac{1}{n_l}}}, \ 
n_j \geq 1
\end{equation*}
be a continued fraction expansion,
and let $y(<m)$ be a unique positive integer such that $xy \equiv (-1)^{l+1} \pmod{m}$.
Then the following identity holds:
\begin{equation*}
\left(\begin{array}{cc}
0 & 1 \\
1 & n_1
\end{array}\right)\left(\begin{array}{cc}
0 & 1 \\
1 & n_2
\end{array}\right)\ldots\left(\begin{array}{cc}
0 & 1 \\
1 & n_l
\end{array}\right)=\left(\begin{array}{cc}
\cfrac{xy+(-1)^l}{m} & x \\
y & m
\end{array}\right).
\end{equation*}
The proof is similar to Lemma \ref{continued fraction 1}.
The following identity can be deduced by taking transpose at the identity above:
\begin{equation*}
\cfrac{m}{y}=n_l+\cfrac{1}{n_{l-1}+\cfrac{1}{\ddots+\cfrac{1}{n_1}}}.
\end{equation*}
\end{remark}

\section{Proof of Theorem \ref{main theorem}}

In this section, we give a proof of Theorem \ref{main theorem}.
We will use the notation in Section 2 freely.
We need the following lemma.

\begin{lemma}\label{xy}
For each $i=1, \ldots, d$, the following identity holds:
\begin{equation*}
\left(\begin{array}{cc}
0 & -1 \\
1 & n^{(i)}_1
\end{array}\right)\left(\begin{array}{cc}
0 & -1 \\
1 & n^{(i)}_2
\end{array}\right)\ldots\left(\begin{array}{cc}
0 & -1 \\
1 & n^{(i)}_{l_i}
\end{array}\right)=\left(\begin{array}{cc}
\cfrac{1-x_iy_i}{|\varepsilon_i|} & -x_i \\
y_i & |\varepsilon_i|
\end{array}\right).
\end{equation*}
\end{lemma}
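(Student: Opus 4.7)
The plan is to derive Lemma \ref{xy} as a direct corollary of Lemma \ref{continued fraction 1}. Since (\ref{l_i}) is the Hirzebruch--Jung continued fraction expansion of $|\varepsilon_i|/x_i$, applying Lemma \ref{continued fraction 1} with $m=|\varepsilon_i|$ and $x=x_i$ immediately gives
\[
\text{LHS} = \begin{pmatrix} (1-x_i y_i')/|\varepsilon_i| & -x_i \\ y_i' & |\varepsilon_i| \end{pmatrix},
\]
where $y_i'$ is the unique positive integer less than $|\varepsilon_i|$ with $x_i y_i' \equiv 1 \pmod{|\varepsilon_i|}$. Comparing with the claimed right-hand side, the entire lemma reduces to showing that the $y_i$ defined in (\ref{y_i}) coincides with $y_i'$, i.e., that $x_i y_i \equiv 1 \pmod{|\varepsilon_i|}$. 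The degenerate case $|\varepsilon_i|=1$ needs a quick separate check: then $l_i=0$, $x_i=y_i=0$, the LHS is the empty product equal to the identity, and the RHS also becomes the identity.

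To establish the congruence, I would extract the first columns of $P_i$ and $Q_i$ from their defining relations and then read off the matrix $Q_i^{-1}P_i$. From Lemma \ref{x_i}, $(v_i,v_{i+1})=P_i\bigl(\begin{smallmatrix}1 & -x_i\\0 & |\varepsilon_i|\end{smallmatrix}\bigr)$, so $v_i=P_ie_1$ and $v_{i+1}=P_i(-x_ie_1+|\varepsilon_i|e_2)$; similarly from (\ref{y_i}), $v_{i+1}=Q_ie_1$ and $v_i=Q_i(-y_ie_1+|\varepsilon_i|e_2)$. Composing these, the first column of the integer matrix $Q_i^{-1}P_i$ must be $\bigl(\begin{smallmatrix}-y_i\\|\varepsilon_i|\end{smallmatrix}\bigr)$, while the second column is determined by the requirement
\[
Q_i^{-1}P_i\begin{pmatrix}-x_i\\|\varepsilon_i|\end{pmatrix} = \begin{pmatrix}1\\0\end{pmatrix}.
\]
Reading the top entry of this last equation gives $x_iy_i + b|\varepsilon_i| = 1$ for some integer $b$ (the top-right entry of $Q_i^{-1}P_i$), which is exactly $x_iy_i\equiv 1\pmod{|\varepsilon_i|}$.

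I do not expect any serious obstacle. The only thing that might require a brief extra remark is the check that the positive representative $y_i$ from (\ref{y_i}) matches the positive representative $y_i'$ from Lemma \ref{continued fraction 1}; both lie in $\{1,\dots,|\varepsilon_i|-1\}$ when $|\varepsilon_i|\geq 2$ (by primitivity arguments analogous to those already used for $x_i$), and the modular inverse in that range is unique, so there is no ambiguity. Everything else is just book-keeping with the definitions.
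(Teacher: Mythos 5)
Your proposal is correct and follows essentially the same route as the paper: both reduce the lemma to Lemma \ref{continued fraction 1} by establishing $x_iy_i\equiv 1\pmod{|\varepsilon_i|}$, and both obtain that congruence by examining the integer matrix $Q_i^{-1}P_i$ (the paper writes out the full product, you extract its columns, but the computation is the same).
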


\begin{proof}
If $|\varepsilon_i|=1$, then $x_i=y_i=l_i=0$
and the left hand side above is understood to be the identity matrix.
Assume $|\varepsilon_i| \geq 2$.
By Lemma \ref{x_i}, $x_i$ and $|\varepsilon_i|$ are relatively prime. Since
\begin{eqnarray*}
Q_i^{-1}P_i&=&\left(\begin{array}{cc}
1 & -y_i \\
0 & |\varepsilon_i|
\end{array}\right)(v_{i+1}, v_i)^{-1}(v_i, v_{i+1})\left(\begin{array}{cc}
1 & -x_i \\
0 & |\varepsilon_i|
\end{array}\right)^{-1} \\
&=&\frac{1}{|\varepsilon_i|}\left(\begin{array}{cc}
1 & -y_i \\
0 & |\varepsilon_i|
\end{array}\right)\left(\begin{array}{cc}
0 & 1 \\
1 & 0
\end{array}\right)\left(\begin{array}{cc}
|\varepsilon_i| & x_i \\
0 & 1
\end{array}\right)=\left(\begin{array}{cc}
-y_i & \cfrac{1-x_iy_i}{|\varepsilon_i|} \\
|\varepsilon_i| & x_i
\end{array}\right)
\end{eqnarray*}
is a unimodular matrix, $x_iy_i$ is congruent to $1$ modulo $|\varepsilon_i|$.
Therefore the lemma follows from Lemma \ref{continued fraction 1}.
\end{proof}

\begin{proof}[Proof of Theorem \ref{main theorem}]
For $j=0, \ldots, l_i+1$, we define
\begin{equation}\label{w}
w^{(i)}_j=\left\{\begin{array}{ll}
P_i\left(\begin{array}{cc}
0 & -1 \\
1 & n^{(i)}_1
\end{array}\right)\cdots\left(\begin{array}{cc}
0 & -1 \\
1 & n^{(i)}_j
\end{array}\right)\left(\begin{array}{c}
1 \\
0
\end{array}\right) & (0 \leq j \leq l_i), \\
P_i\left(\begin{array}{cc}
0 & -1 \\
1 & n^{(i)}_1
\end{array}\right)\cdots\left(\begin{array}{cc}
0 & -1 \\
1 & n^{(i)}_{j-1}
\end{array}\right)\left(\begin{array}{c}
0 \\
1
\end{array}\right) & (1 \leq j \leq l_i+1).
\end{array}\right.
\end{equation}
Note that both expressions at the right hand side of (\ref{w})
are equal if $1 \leq j \leq l_i$.
By the definition of $w^{(i)}_j$, it follows that
\begin{equation}\label{epsilon}
{\rm det}(w^{(i)}_j,w^{(i)}_{j+1})
={\rm det}(P_i){\rm det}\left(\left(\begin{array}{c}
1 \\
0
\end{array}\right), \left(\begin{array}{c}
0 \\
1
\end{array}\right)\right)=\cfrac{\varepsilon_i}{|\varepsilon_i|}
\in \{\pm1\}
\end{equation}
for any $j=0, \ldots, l_i$. So the sequence
\begin{equation}\label{unimodular}
\ldots, v_i=w^{(i)}_0, w^{(i)}_1, \ldots, w^{(i)}_{l_i+1}=v_{i+1}, \ldots
\end{equation}
is unimodular.

\begin{figure}[htbp]
\begin{center}
\includegraphics{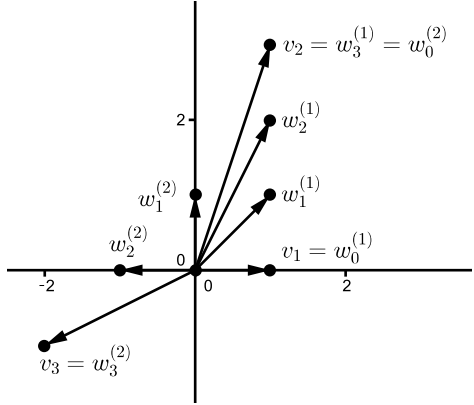}
\end{center}
\caption{Adding $w^{(i)}_j$ to the given vector sequence}
\label{fig:two}
\end{figure}

Hence by Theorem \ref{Higashitani-Masuda},
the rotation number of $v_1, \ldots, v_d$ is given by
\begin{eqnarray}\label{rotation}
&& \frac{1}{4}\sum_{i=1}^d\sum_{j=0}^{l_i}{\rm det}(w^{(i)}_j, w^{(i)}_{j+1}) \nonumber \\
&+& \frac{1}{12}\sum_{i=1}^d\frac{{\rm det}(w^{(i)}_1, w^{(i-1)}_{l_{i-1}})}
{{\rm det}(w^{(i-1)}_{l_{i-1}}, v_i){\rm det}(v_i, w^{(i)}_1)} \\
&+& \frac{1}{12}\sum_{i=1}^d\sum_{j=1}^{l_i}\frac{{\rm det}(w^{(i)}_{j+1}, w^{(i)}_{j-1})}
{{\rm det}(w^{(i)}_{j-1}, w^{(i)}_j){\rm det}(w^{(i)}_j,w^{(i)}_{j+1})}. \nonumber
\end{eqnarray}

As for the first summand in (\ref{rotation}), it follows from (\ref{epsilon}) that
\begin{equation*}
\sum_{j=0}^{l_i}{\rm det}(w^{(i)}_j, w^{(i)}_{j+1})
=(l_i+1)\frac{\varepsilon_i}{|\varepsilon_i|}.
\end{equation*}

As for the second summand in (\ref{rotation}), we first observe that
it follows from Lemma \ref{a_i} that
\begin{eqnarray*}
P_{i-1}^{-1}P_i&=&\left(\begin{array}{cc}
1 & -x_{i-1} \\
0 & |\varepsilon_{i-1}|
\end{array}\right)(v_{i-1}, v_i)^{-1}(v_i, v_{i+1})\left(\begin{array}{cc}
1 & -x_i \\
0 & |\varepsilon_i|
\end{array}\right)^{-1} \\
&=&\frac{1}{|\varepsilon_i|}\left(\begin{array}{cc}
1 & -x_{i-1} \\
0 & |\varepsilon_{i-1}|
\end{array}\right)(v_{i-1}, v_i)^{-1}
(v_i, -\varepsilon_i(\varepsilon_{i-1}^{-1}v_{i-1}+a_iv_i))\left(\begin{array}{cc}
|\varepsilon_i| & x_i \\
0 & 1
\end{array}\right) \\
&=&\frac{1}{|\varepsilon_i|}\left(\begin{array}{cc}
1 & -x_{i-1} \\
0 & |\varepsilon_{i-1}|
\end{array}\right)\left(\begin{array}{cc}
0 & -\varepsilon_i\varepsilon_{i-1}^{-1} \\
1 & -a_i\varepsilon_i
\end{array}\right)\left(\begin{array}{cc}
|\varepsilon_i| & x_i \\
0 & 1
\end{array}\right) \\
&=&\frac{1}{|\varepsilon_i|}\left(\begin{array}{cc}
-|\varepsilon_i|x_{i-1} & -\varepsilon_i\varepsilon_{i-1}^{-1}-x_{i-1}x_i+a_i\varepsilon_ix_{i-1} \\
|\varepsilon_{i-1}||\varepsilon_i| & |\varepsilon_{i-1}|(x_i-a_i\varepsilon_i)
\end{array}\right).
\end{eqnarray*}
So it follows from (\ref{unimodular}), (\ref{w}), (\ref{epsilon}), and Lemma \ref{xy} that
\begin{eqnarray*}
&& \frac{{\rm det}(w^{(i)}_1, w^{(i-1)}_{l_{i-1}})}
{{\rm det}(w^{(i-1)}_{l_{i-1}}, v_i){\rm det}(v_i, w^{(i)}_1)}
=\frac{{\rm det}(w^{(i)}_1, w^{(i-1)}_{l_{i-1}})}
{{\rm det}(w^{(i-1)}_{l_{i-1}}, w^{(i-1)}_{l_{i-1}+1}){\rm det}(w^{(i)}_0, w^{(i)}_1)} \\
&=& \frac{|\varepsilon_{i-1}||\varepsilon_i|}{\varepsilon_{i-1}\varepsilon_i}
{\rm det}\left(P_i\left(\begin{array}{c}
0 \\
1
\end{array}\right), P_{i-1}\left(\begin{array}{cc}
0 & -1 \\
1 & n^{(i-1)}_1
\end{array}\right)\cdots\left(\begin{array}{cc}
0 & -1 \\
1 & n^{(i-1)}_{l_{i-1}}
\end{array}\right)\left(\begin{array}{c}
1 \\
0
\end{array}\right)\right) \\
&=& \frac{|\varepsilon_{i-1}||\varepsilon_i|}{\varepsilon_{i-1}\varepsilon_i}
{\rm det}(P_{i-1}){\rm det}\left(P_{i-1}^{-1}P_i\left(\begin{array}{c}
0 \\
1
\end{array}\right), \left(\begin{array}{c}
\cfrac{1-x_{i-1}y_{i-1}}{|\varepsilon_{i-1}|} \\
y_{i-1}
\end{array}\right)\right) \\
&=& \frac{1}{\varepsilon_i}
{\rm det}\left(\begin{array}{cc}
-\cfrac{\varepsilon_i}{\varepsilon_{i-1}}-x_{i-1}x_i+a_i\varepsilon_ix_{i-1} &
\cfrac{1-x_{i-1}y_{i-1}}{|\varepsilon_{i-1}|} \\
|\varepsilon_{i-1}|(x_i-a_i\varepsilon_i) & y_{i-1}
\end{array}\right) \\
&=&a_i-\frac{x_i}{\varepsilon_i}-\frac{y_{i-1}}{\varepsilon_{i-1}}.
\end{eqnarray*}

As for the last summand in (\ref{rotation}), it follows from (\ref{w}) and (\ref{epsilon}) that
\begin{eqnarray*}
&& \frac{{\rm det}(w^{(i)}_{j+1}, w^{(i)}_{j-1})}
{{\rm det}(w^{(i)}_{j-1}, w^{(i)}_j){\rm det}(w^{(i)}_j,w^{(i)}_{j+1})} \\
&=& {\scriptstyle {\rm det}(P_i){\rm det}\left(\left(\begin{array}{cc}
{\scriptstyle 0} & {\scriptstyle -1} \\
{\scriptstyle 1} & {\scriptstyle n^{(i)}_1}
\end{array}\right)\cdots\left(\begin{array}{cc}
{\scriptstyle 0} & {\scriptstyle -1} \\
{\scriptstyle 1} & {\scriptstyle n^{(i)}_j}
\end{array}\right)\left(\begin{array}{c}
{\scriptstyle 0} \\
{\scriptstyle 1}
\end{array}\right), \left(\begin{array}{cc}
{\scriptstyle 0} & {\scriptstyle -1} \\
{\scriptstyle 1} & {\scriptstyle n^{(i)}_1}
\end{array}\right)\cdots\left(\begin{array}{cc}
{\scriptstyle 0} & {\scriptstyle -1} \\
{\scriptstyle 1} & {\scriptstyle n^{(i)}_{j-1}}
\end{array}\right)\left(\begin{array}{c}
{\scriptstyle 1} \\
{\scriptstyle 0}
\end{array}\right)\right)} \\
&=& {\rm det}(P_i){\rm det}\left(\left(\begin{array}{cc}
0 & -1 \\
1 & n^{(i)}_j
\end{array}\right)\left(\begin{array}{c}
0 \\
1
\end{array}\right), \left(\begin{array}{c}
1 \\
0
\end{array}\right)\right) \\
&=& \frac{\varepsilon_i}{|\varepsilon_i|}{\rm det}\left(\begin{array}{cc}
-1 & 1 \\
n^{(i)}_j & 0
\end{array}\right)=-n^{(i)}_j\frac{\varepsilon_i}{|\varepsilon_i|}.
\end{eqnarray*}

Therefore (\ref{rotation}) reduces to
\begin{eqnarray*}
&& \frac{1}{4}\sum_{i=1}^d(l_i+1)\frac{\varepsilon_i}{|\varepsilon_i|}
+\frac{1}{12}\sum_{i=1}^d\left(a_i-\frac{x_i}{\varepsilon_i}-\frac{y_{i-1}}{\varepsilon_{i-1}}\right)
+\frac{1}{12}\sum_{i=1}^d\sum_{j=1}^{l_i}\left(-n^{(i)}_j\frac{\varepsilon_i}{|\varepsilon_i|}\right) \\&=& \frac{1}{12}\sum_{i=1}^d\left(\left(3(l_i+1)
-\sum_{j=1}^{l_i}n^{(i)}_j\right)\frac{\varepsilon_i}{|\varepsilon_i|}
+a_i-\frac{x_i+y_i}{\varepsilon_i}\right),
\end{eqnarray*}
proving the theorem.
\end{proof}

\begin{remark}
It sometimes happens that a sequence of primitive vectors $v_1, \ldots, v_d$ is not unimodular
but is unimodular with respect to the sublattice of $\mathbb{Z}^2$ generated by
vectors $v_1, \ldots, v_d$. Such a sequence is called an $l$-reflexive loop
and studied in \cite{Kasprzyk and Nill}.
Theorem \ref{Higashitani-Masuda} can be applied to an $l$-reflexive loop
with respect to the sublattice generated by the vectors in the $l$-reflexive loop,
but it is unclear whether the resulting formula can be obtained from Theorem \ref{main theorem}.
\end{remark}

\section{Acknowledgement}

The author wishes to thank Professor Mikiya Masuda
for his valuable advice and continuing support.

\end{document}